\newtheorem{thm}{Theorem}
\newtheorem{lem}{Lemma}
\theoremstyle{definition}
\newtheorem*{rem}{Remark}
\title{Keeler's Theorem and Products of Distinct Transpositions}
\author{\\ Ron Evans, Lihua Huang, and Tuan Nguyen}
\date{}
\begin{document}
\maketitle
\begin{abstract}
An episode of 
{\em Futurama} features a two-body mind-switching machine which
will not work more than
once on the same pair of bodies.
After the Futurama community engages in a mind-switching spree,
the question is asked,
``Can the switching be undone so as to restore all minds to their
original bodies?"  
Ken Keeler found an algorithm that undoes any mind-scrambling
permutation with the aid of two ``outsiders."  We refine Keeler's result by
providing a more efficient algorithm that uses the smallest possible number of
switches.  We also present best possible algorithms for undoing two
natural sequences of switches, each 
sequence effecting a cyclic mind-scrambling 
permutation in the symmetric group $S_n$.  
Finally, we give necessary and sufficient conditions 
on $m$ and $n$ for the identity permutation to be expressible as
a product of $m$ distinct transpositions in $S_n$.
\end{abstract}

\section{Introduction}

``The Prisoner of Benda" \cite{ams},
an acclaimed episode of the animated television series
{\em Futurama}, features a two-body mind-switching machine. 
Any pair can enter the machine to swap minds, but there is
one serious limitation:
the machine will not work more than
once on the same pair of bodies.

After the Futurama community indulges in a mind-switching frenzy,
the question is raised: 
``Can the switching be undone so as to restore
all minds to their original bodies?"
The show provides an answer using
what is known in the popular culture as ``Keeler's theorem"
\cite{grime}.
The theorem is the brainchild of
the show's writer Ken Keeler \cite{aps}, 
who earned a PhD in Mathematics from Harvard
University in 1990 \cite{genealogy} 
before becoming a television writer/producer.
For ``The Prisoner of Benda," Keeler garnered a
2011 Writers Guild Award \cite{wga}.

The problem of undoing the switching can be modeled in terms of 
group theory. Represent the bodies involved in the switching frenzy by
$\{1,2,\dots, n\}$. The symmetric group $S_n$ consists of the
$n!$ permutations
of $\{1,2,\dots, n\}$. Let $I$ denote the 
identity permutation. A $2$-cycle $(ab)$ 
is called a transposition; it represents the
permutation which switches the minds of bodies $a$ and $b$.
The $k$-cycle $(a_1\dots a_k)$ is the permutation which sends 
$a_1$'s mind to $a_2$,
$a_2$'s mind to $a_3$, $\dots$, and $a_k$'s mind to $a_1$.  
Following the convention in \cite{BB}, we compute
products (i.e., compositions) in $S_n$ 
from right to left. For example, 
$(123) = (12)(23) = (13)(12) = (23)(13)$.  

The successive swapping of minds during the switching frenzy
can be represented by a product $P$ of
distinct transpositions in $S_n$.
(The transpositions must be distinct due to the limitation of the machine.)
In addition to viewing $P$ formally as a product, we can also view $P$ as
a permutation. It will be assumed that this permutation
is nontrivial, otherwise nothing
needs to be undone.  For an example of $P$, suppose that
$2$ switches minds with $3$ and then $2$ switches minds with
$1$; this corresponds to the product $P=(12)(23)$,  
yielding the mind-scrambling permutation $P=(123)$.  

To restore all minds to their original bodies,
one must find a product $\sigma$ of
distinct transpositions such that the permutation
$\sigma P$ equals $I$ and such that the transposition factors 
in the product $\sigma$ 
are distinct from those in the product $P$.  Such a 
$\sigma$ is said to {\em undo} $P$.  From now on, the phrase
``transposition factors" will be shortened simply to ``factors".

In the aftermath of a switching frenzy, 
the community may have
no recollection of the sequence of switches
that had taken place.
It is then expedient to find a product $\sigma$
that is guaranteed to undo the mind-scrambling permutation $P \in S_n$
{\em regardless} of which 
sequence of transpositions in $S_n$ had effected $P$.
Keeler's theorem explicitly produces such a product $\sigma \in S_{n+2}$.
Each factor in Keeler's $\sigma$ contains at least one entry in the set
$\{x,y\}$, where
\[
x:=n+1 \ \mbox{     and     } \ y:=n+2; 
\]
hence the factors in $\sigma$ are distinct
from whatever transpositions had effected $P$.
One can view $x$ and $y$ as altruistic {\em outsiders}
who had never entered the machine during the frenzy,
but who are subsequently willing to endure frequent mind switches
in order to help others restore their minds to their original bodies.

Viewed as a permutation, $P$ can be expressed (uniquely up
to ordering) as the product
$P = C_1 \cdots C_r$ of nontrivial disjoint cycles
$C_1,\dots, C_r$ in $S_n$ 
\cite[p. 77]{BB}.
For each $i=1,\dots,r$, let $k_i$ denote the length of cycle $C_i$.
In discussing Keeler's theorem and our refinement (Theorem 1),
we will assume that
$k_1+\cdots +k_r = n.$ 
This presents no loss of
generality, since if $k_1+\cdots +k_r = m < n$, then we could
relabel the bodies and mimic the arguments
using $m$ in place of $n$.

We now describe Keeler's method for constructing a product
$\sigma \in S_{n+2}$ which undoes $P = C_1 \cdots C_r$.
For convenience of notation, write $k=k_1$, 
so that $C_1$ is a $k$-cycle $(a_1 \dots a_k)$
with each $a_i \in \{1,2,\dots, n\}$.  It is easily checked that
$\sigma_1 C_1 = (xy)$, where $\sigma_1$ is the product of $k+2$
transpositions given by
\begin{equation}\label{eq:1}
\sigma_1 = (xa_1)(xa_2) \cdots (xa_{k-1})\cdot(ya_k)(xa_k)(ya_1).
\end{equation} 
For each $C_i$, define analogous products
$\sigma_i$ of $k_i + 2$ transpositions satisfying
\[
\sigma_i C_i = (xy) \ \mbox{   for   } \ i=1,\dots,r.
\]
Note that every factor of $\sigma_i$ has the form $(xu)$ or $(yu)$
for some entry $u$ in $C_i$.  Since disjoint cycles commute,
$(xy)$ commutes with every transposition in $S_n$, so
$\tau: = \sigma_r \cdots \sigma_2 \sigma_1$
is a product of distinct transpositions for which $\tau P = (xy)^r$.
Taking 
\begin{equation}\label{eq:2}
\sigma = 
\begin{cases} 
\ (xy) \tau, & \mbox{ if } \ r \mbox{ is odd }
\\
\ \tau, & \mbox{ if } \ r \mbox{ is even},
\end{cases}
\end{equation}
we find that $\sigma$ undoes $P$ and $\sigma$ is a product of distinct
transpositions in $S_{n+2}$,
each containing at least one entry in $\{x,y\}$, as desired.

By (\ref{eq:1}) and (\ref{eq:2}), 
the number of factors in Keeler's $\sigma$ is
either $n+2r+1$ or $n+2r$ 
according as $r$ is odd or even.
In Theorem 1 of the next section,  we refine Keeler's method by showing that
$P$ can be undone via a product of only
$n + r+2$ distinct transpositions,
each containing at least one entry in $\{x,y\}$.
We show moreover that this result
is ``best possible" in the sense that
$n + r + 2$ cannot
be replaced by a smaller number.
Thus Keeler's algorithm is optimal for $r=1$ and $r=2$, but for no other $r$.

With the aim of finding interesting classes of products
that can be undone using {\em fewer} than two outsiders,
we examined
what are undoubtedly the two most natural products $P$ in $S_n$ effecting 
the cycle $(12\dots n)$, namely  \cite[p. 81]{BB} 
\[
P_1 = (12)(23)(34)\cdots(n-1,n) \ \mbox{ and }
\ P_2 = (n-1,n)\cdots(3n)(2n)(1n).
\]
Theorems 2 and 3 determine how
many outsiders and how many mind switches are necessary and sufficient to  
undo each of these two products.
Theorem 2 shows that for $n \ge 5$, $P_1$ can be undone without any outsiders,
using only $n+1$ switches, where $n+1$ is best possible.
Theorem 3 shows
that for $n \ge 3,\ $ $P_2$ can be undone using 
only one outsider, again with
$n+1$ switches, where $n+1$ is best possible.

Suppose for the moment that $n \ge 5$. 
While $P_1$ and $P_2$ can both be undone
with fewer than two outsiders, 
there are other products $P_3(n)$ in $S_n$ effecting $(12\dots n)$
for which  {\em two} outsiders are required to undo $P_3(n)$. 
For an example with $n=5$, let
\[
P_3(5):=(54)(53)(52)(51)(12)(23)(14)(13)(24)(34) = (12345).
\]
Note that all ten transpositions in $S_5$ are factors of $P_3(5)$.
Suppose for the purpose of contradiction that $P_3(5)$ can be undone by
a product $\sigma$ in $S_6$, i.e., with just one outsider.  Every
entry in $P_3(5)$ must appear in $\sigma$, so
$\sigma$ must be a product of the five factors $(61)$, $(62)$, $(63)$,
$(64)$, $(65)$ in some order. The permutation $\sigma$ thus fails to fix
the entry $6$, which yields the contradiction  $\sigma P_3(5) \ne I$.
The argument for $n=5$ works
the same way for all $n\ge 5$ of the form $4k+1$ or $4k+2$.  Simply take
$P_3(n): = P_2J$, where $P_2$ is defined in Theorem 3, and $J$ is the identity
formulated as a product of all $\binom{n-1}{2}$ transpositions in
$S_{n-1}$, as in Theorem 4.  We omit the argument for
$n$ of the form $4k$ or $4k+3$, as it's a bit more involved.

The products $P_1$ and $P_2$ each have the property that no two
consecutive factors are disjoint.  In contrast, consider the product
of $m$ disjoint factors
\[
P(m):=(12)(34)\dots(2m-1,2m).
\]
We call $P(m)$ the {\em Stargate switch} because $P(2)$ represents
a sequence of mind swaps featured in an episode of the
sci-fi television series {\em Stargate SG-1}\cite{holiday}.  
The first and second authors \cite{EH} have given an optimal algorithm
for undoing $P(m)$; for $m>1$, the algorithm requires no outsiders.

When $n \ge 5$, Theorem 2 provides equalities of the form
$\sigma P_1=I$ which express the identity $I$
as a product of $2n$ distinct transpositions in $S_n$.
Such equalities lead to the question:
``What are necessary and sufficient conditions on $m$ and $n$ for
$I$ to be expressible as a product of $m$ distinct transpositions
in $S_n$?" Theorem 4 provides the answer: it is necessary
and sufficient that $m$ be an even integer with 
$6 \le m \le \binom{n}{2}$.  

In order to prove Theorems 2--4, we require some properties of
cycles proved via graph theory in Lemma 1. 
The proof of Lemma 1(c) incorporates an idea of Jacques Verstraete in a
proof due to Isaacs \cite{isaacs2}.
We are grateful for their permission to include
it here, as our original proof was considerably less elegant.

We will also need the well-known ``Parity theorem," which shows
that the identity
permutation $I$ cannot equal a product of an odd number of transpositions.
Two proofs of the Parity theorem may be found in \cite[pp. 82, 149]{BB};
for an elegant recent proof, see Oliver \cite{oliver}.

\section{An optimal refinement of Keeler's method}

Keeler's algorithm was designed to undo every mind-scrambling permutation 
$P=C_1\cdots C_r$
that is effected by an unknown sequence of mind swaps.  
In this section, we present another such algorithm.  
While Keeler's algorithm is optimal only for $r\le2$, we prove that
our algorithm is optimal for all $r$.

\begin{thm}\label{thm:1}

Let $P=C_1\cdots C_r$ be a product of $r$ disjoint $k_i$-cycles $C_i$
in $S_n$, with $k_i \ge 2$ and  $n=k_1+\cdots +k_r$.  
Define $x=n+1$ and $y=n+2$.
Then $P$ can be undone by
a product $\lambda$ of $n+r+2$ distinct transpositions
in $S_{n+2}$, each containing at least one entry in $\{x,y\}$.
Moreover, this result is best possible in the sense that $n+r+2$
cannot be replaced by a smaller number.
\end{thm}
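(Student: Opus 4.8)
The plan is to prove that $n+r+2$ distinct transpositions are both necessary and sufficient. Since each factor of the undoing product $\lambda$ is required to contain an entry of $\{x,y\}$, such a factor automatically differs from every transposition of $S_n$, so the requirement that $\lambda$'s factors be distinct from those that effected $P$ is free; the whole problem becomes: among all products of distinct transpositions, each meeting $\{x,y\}$, that equal $P^{-1}$ as a permutation of $\{1,\dots,n,x,y\}$, find one with as few factors as possible. I will show that number is exactly $n+r+2$.

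For the lower bound, I would attach to any candidate $\lambda$ the graph $G$ on vertex set $\{1,\dots,n,x,y\}$ whose edges are the transposition-factors of $\lambda$. Every edge meets $\{x,y\}$; every entry of $\{1,\dots,n\}$ is moved by $P^{-1}=\lambda$ and hence lies on some edge; and the product of the edges of a star (a tree) is a single cycle through its centre, so if $G$ avoided $x$ (or $y$) then $\lambda$ would move $y$ (or $x$), contradicting $\lambda=P^{-1}$; the same observation forces $G$ to be connected. Thus $G$ is connected on exactly $n+2$ vertices. Using the standard facts that the $N-1$ edges of a tree multiply (in any order) to an $N$-cycle and that multiplying a permutation by a transposition changes its number of cycles by $\pm1$, a spanning tree of $G$ together with the remaining $|E(G)|-(n+1)$ edges shows that $\lambda$ has at most $|E(G)|-n$ cycles (one from the tree, at most one more per extra edge). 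But $P^{-1}$, acting on $\{1,\dots,n,x,y\}$, has exactly $r+2$ cycles: the $r$ cycles $C_i^{-1}$, which between them exhaust $\{1,\dots,n\}$ because $k_1+\cdots+k_r=n$, together with the fixed points $x$ and $y$. Hence $|E(G)|\ge n+r+2$.

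For the construction, write $C_i=(a_{i,1}\,a_{i,2}\cdots a_{i,k_i})$. I would treat $C_1$ by Keeler's move \eqref{eq:1}, taking $\sigma_1=(xa_{1,1})\cdots(xa_{1,k_1-1})\,(ya_{1,k_1})(xa_{1,k_1})(ya_{1,1})$, so that $\sigma_1C_1=(xy)$ at the cost of $k_1+2$ factors. For each of the remaining cycles I would use the cheaper block
\[
\mu_i:=(xa_{i,2})(xa_{i,3})\cdots(xa_{i,k_i})(xa_{i,1}),\qquad 2\le i\le r,
\]
a product of $k_i$ transpositions, for which a routine check gives $\mu_iC_i=(xa_{i,1})$. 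Since disjoint cycles and disjoint transpositions commute, sliding the transpositions so produced past the untreated cycles yields
\[
\mu_r\mu_{r-1}\cdots\mu_2\,\sigma_1\,P\;=\;(xa_{r,1})(xa_{r-1,1})\cdots(xa_{2,1})(xy)\;=:\;R,
\]
which is an $(r+1)$-cycle. Consequently $\lambda:=R^{-1}\,\mu_r\cdots\mu_2\,\sigma_1$ satisfies $\lambda P=I$, and a short computation gives $R^{-1}=(ya_{2,1})(ya_{3,1})\cdots(ya_{r,1})(xy)$, a product of $r$ transpositions. The factors of $\lambda$ are then the $k_1+2$ factors of $\sigma_1$, the $k_2+\cdots+k_r$ transpositions $(xa_{i,j})$ ($i\ge2$) coming from the $\mu_i$, and the $r$ transpositions $(ya_{2,1}),\dots,(ya_{r,1}),(xy)$ of $R^{-1}$: in all $(k_1+2)+(n-k_1)+r=n+r+2$. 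Each meets $\{x,y\}$, and they are pairwise distinct, since transpositions attached to different cycles never coincide, $\sigma_1$ is the only block using a transposition $(ya_{1,\cdot})$ on $C_1$, and $(xy)$ occurs only in $R^{-1}$. Combined with the lower bound, this shows $n+r+2$ is best possible.

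The hard part is finding this construction, not verifying it. The naive strategies --- apply Keeler's $\sigma_i$ to every $C_i$, or undo each $C_i$ by a self-contained gadget --- cost roughly $n+2r$, because each cycle separately pays for a round trip through the second outsider; and one cannot patch up an all-$(x\,\cdot)$ product cheaply, since the required correction is forced to be an $(r+1)$-cycle through $x$, and once every transposition $(x\,\cdot)$ with $\cdot\in\{1,\dots,n\}$ has been used, the only remaining transposition that touches $x$ is $(xy)$. The resolution is to let just one cycle ($C_1$) manufacture a single reusable copy of $(xy)$, to handle every other $C_i$ only partially --- extracting the single transposition $(xa_{i,1})$ at the reduced cost $k_i$ --- and then to notice that all of these leftovers package into one $(r+1)$-cycle $R$ whose inverse costs only $r$ further transpositions, which can be routed through $y$ so as to collide with nothing used earlier. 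Pinning down that $R^{-1}$ really can be written with exactly those $r$ transpositions, and that no collision occurs, is the crux.
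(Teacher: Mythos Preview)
Your proof is correct, and both halves differ in interesting ways from the paper's.

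\medskip
\textbf{Construction.} Your $\lambda$ and the paper's use nearly the same multiset of transpositions (with the roles of $x$ and $y$ swapped on $C_1$) but are ordered differently. The paper simply writes down
\[
\lambda=(xy)\,G_r(x)\cdots G_2(x)\,(a_kx)\,G_1(y)\,(a_1x)\,F_2(y)\cdots F_r(y)
\]
and invites the reader to verify $\lambda P=I$. You instead \emph{derive} your product: start from Keeler's $\sigma_1$ on $C_1$, attach a cheap $k_i$-factor star $\mu_i$ to each remaining $C_i$, observe that the accumulated debris is a single $(r{+}1)$-cycle $R$ through $x$ and $y$, and then write $R^{-1}$ as $r$ fresh $y$-transpositions. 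This is a genuine explanation of \emph{why} $n+r+2$ is attainable, not just a certificate, and it makes transparent why the naive Keeler count $n+2r$ overpays.

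\medskip
\textbf{Lower bound.} Here the methods diverge substantially. The paper argues by contradiction: assuming $t<n+r+2$, parity forces $t=n+r$; a leftmost/rightmost analysis then shows each $C_i$ contributes exactly one ``doubled'' entry whose two occurrences bracket all other entries of $C_i$, and a delicate nesting argument on these brackets produces the contradiction. Your route is graph-theoretic and yields $t\ge n+r+2$ directly, with no parity step and no case analysis: the factor-graph $G$ on $\{1,\dots,n,x,y\}$ must be connected, and a connected graph on $N$ vertices whose $m$ edges multiply to a permutation with $c$ cycles satisfies $c\le m-N+2$; plugging in $c=r+2$ and $N=n+2$ gives the bound. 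This is shorter and more conceptual than the paper's argument.

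\medskip
\textbf{One point to tighten.} Your justification ``one cycle from the spanning tree, at most one more per extra edge'' reads as if you may reorder the factors so the tree edges come first; you may not. The clean way to say what you mean is to process the factors of $\lambda$ in their given order and watch the partial graph grow: exactly $N-1$ of the $m$ steps merge two components, and at each such step the two endpoints lie in different orbits of the partial product, so the cycle count necessarily drops by~$1$; at each of the remaining $m-(N-1)$ steps it changes by $\pm1$. Hence the final cycle count is at most $1+(m-N+1)=m-N+2$. With that phrasing the argument is airtight.
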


\begin{proof}
Write $k=k_1$, so that
$C_1$ is a $k$-cycle $(a_1 \dots a_k)$.
Corresponding to the cycle $C_1$, define 
\[
G_1(x) = (a_1x)(a_2x)\cdots (a_kx) \ \mbox{  and  } \ F_1(x) =(a_1x).
\]
Corresponding to each cycle $C_i$ for $i=1,\dots,r$,
define $G_i(x)$ and $F_i(x)$ analogously.
Set
\[
\lambda =
(xy)\cdot G_r(x)\cdots G_2(x)\cdot (a_kx)G_1(y)(a_1x)\cdot 
F_2(y)\cdots F_r(y).
\]
It is readily checked that
$\lambda$ undoes $P$ and that
$\lambda$ is a product of $n+r+2$ distinct transpositions
in $S_{n+2}$, each containing at least one entry
in $\{x,y\}$.

It remains to prove optimality.
Suppose for the purpose of contradiction that
$P$ can be undone by
a product $\sigma$ of $t < n+r+2$ distinct transpositions
in $S_{n+2}$, each containing at least one entry
in $\{x,y\}$.
Then by the Parity theorem, $t \le n+r$.

On the other hand, we have the lower bound $t \ge n$,
since each of the $n$ entries in $P$ must occur 
(coupled with $x$ or $y$) in a factor of $\sigma$.
Let $A$
denote the set of entries in $C_1=(a_1 \dots a_k)$, and
let $a$ denote the leftmost element of $A$ appearing in the
product $\sigma$.  Since $P$ maps $a$ to some other element of $A$,
it follows that $a$ appears twice in $\sigma$, i.e.,  
$\sigma$ has both of the factors $(ax)$ and $(ay)$.
The same argument
shows that each of the $r$ cycles $C_i$ contains an entry which
appears twice in $\sigma$.  Thus the inequality $t \ge n$
can be strengthened to $t \ge n+r$.  Consequently,
$t = n+r$.  It follows that
each of the $r$ cycles $C_i$ contains exactly one entry which
appears twice in $\sigma$, and the other $n-r$ entries appear only once.
This accounts for all $n+r$ factors of  $\sigma$, so in particular,
$(xy)$ cannot be a factor of $\sigma$.

Let $a'$ denote the rightmost element of $A$ appearing in the
product $\sigma$.  Since $P$ maps some element of $A$ to $a'$,
it follows that $a'$ appears twice in $\sigma$.  Since $a$ is the
only element of $A$ that
appears twice  in $\sigma$, we must have $a=a'$.
Consequently, we have shown the following two properties of $C_1$:

\noindent (i) there is a unique entry $a$ in $C_1$ for which
the transpositions $(ax)$ and $(ay)$ both occur as factors of
$\sigma$, and 
(ii) each entry of $C_1$ other than $a$ occurs in exactly one
factor of $\sigma$, and that factor lies strictly
between $(ax)$ and $(ay)$.  

\noindent These two properties are similarly shared by
each of the $r$ cycles $C_i$.

Let $N_1$ denote the number of transpositions
in $\sigma$ that lie strictly between 
its factors
$(ax)$ and $(ay)$.  Define $N_i$
similarly for each of the $r$ cycles $C_i$. We may
assume without loss of generality that
$N_1 \le N_i$ for all $i$.
We may also assume that
the factor $(ax)$ in $\sigma$ lies to the left of
the factor $(ay)$, and that $a=a_k$.

Let $M_y$ denote the set of factors in $\sigma$
which contain the entry $y$ and which lie 
between $(a_kx)$ and $(a_ky)$ inclusive.
Suppose for the purpose of contradiction that
every transposition in $M_y$ has the form $(a_iy)$
for some $a_i \in A$.  Since
$\sigma$ must send $a_{i+1}$ to $a_i$ for each $i=1,\dots,k-1$,
it follows that the elements of $M_y$ 
have to occur in the following order in $\sigma$:
\[
(a_1y), \ (a_2y), \ \dots, \ (a_{k-1}y), \ (a_ky).
\]
But then $\sigma$ could not send $a_1$ to $a_k$, a contradiction.
Thus some transposition in $M_y$ must have the form $(hy)$, 
where $h \notin A$.
Consider the rightmost $(hy) \in M_y$ with $h \notin A$.
For some fixed $j > 1,\ $ $h$ is an entry of the cycle $C_j$.
Among all the elements $(a_iy) \in M_y$ 
that lie to the right of $(hy)$,
let $(a_my)$ denote the
one closest to $(hy)$.
As $\sigma$ cannot send $a_m$ to $h$, it follows that the entry $h$
occurs twice between  $(a_kx)$ and $(a_ky)$, i.e.,  $\sigma$ has factors
$(hx)$ and $(hy)$ both lying strictly between $(a_kx)$ and $(a_ky)$.
Thus $N_j < N_1$.  This violates the minimality of $N_1$,
giving us the desired contradiction.
\end{proof}

\section{A lemma on factorizations of cycles}

\begin{lem}\label{lem:1}
For $2 \le k \le n$,
suppose that the $k$-cycle $(a_1 \dots a_k) \in S_n$
equals a product $P$ of 
$t$ transpositions in $S_n$. Then
{\bf (a)}  $t \ge k-1,\ $ 
{\bf (b)} when $t=k-1$, the set of entries in $P$ is
$V:=\{a_1,\dots,a_k\}$, and 
{\bf (c)} when $t=k-1$, at least one factor of $P$ has the form
$(a_ia_{i+1})$  with $1 \le i < k$.
\end{lem}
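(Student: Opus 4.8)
The plan is to handle all three parts with one graph-theoretic gadget. Given $P=\tau_1\cdots\tau_t$ with each $\tau_i$ a transposition $(a_i\,b_i)$, let $W$ be the union of the supports of the $\tau_i$ and let $G$ be the (loopless) multigraph on vertex set $W$ having one edge $\{a_i,b_i\}$ for each factor. The only fact I need is routine: the group $\langle\tau_1,\dots,\tau_t\rangle$ is generated by the edges of $G$, so each of its orbits on $W$ lies inside a single connected component of $G$; in particular every orbit of $P$ lies in one component of $G$, and $P$ fixes every point of $\{1,\dots,n\}\setminus W$.

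For (a) and (b): the entries $a_1,\dots,a_k$ constitute a single orbit of the $k$-cycle $P$, so they lie inside one component $C$ of $G$; hence $|C|\ge k$, and being connected, $C$ carries at least $|C|-1\ge k-1$ edges, so $t\ge k-1$, which is (a). If $t=k-1$, then $C$ has exactly $k-1$ edges and no edge of $G$ can lie outside $C$ (a vertex of $W\setminus C$ would force a stray edge), so $W=C$; since $C$ is connected on $|C|$ vertices with $k-1$ edges, $|C|\le k$, and combined with $|C|\ge k$ this gives $|C|=k$ and $W=\{a_1,\dots,a_k\}$, which is (b). Moreover a connected multigraph on $k$ vertices with exactly $k-1$ edges is a genuine tree $T$, so in this case the $k-1$ factors of $P$ are distinct and are precisely the edges of a tree on $\{a_1,\dots,a_k\}$ --- the starting point for (c).

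For (c) I would induct on $k$; the case $k=2$ is immediate since $\tau_1=(a_1a_2)$. For $k\ge 3$ write $\tau_1=(uv)$ for the leftmost factor and delete this edge from $T$, splitting $T$ into subtrees on vertex sets $S\ni u$ and $S'\ni v$. Using the standard fact that a product of the edges of a tree, taken in any order, is a single cycle on its vertex set, the remaining product $\tau_2\cdots\tau_{k-1}$ equals $\alpha\,\alpha'$ (the two groups of factors commute, having disjoint supports), where $\alpha=(u\,p_1\cdots p_{|S|-1})$ is a cycle on $S$ and $\alpha'=(v\,q_1\cdots q_{|S'|-1})$ is a cycle on $S'$; a direct computation then gives the splicing identity
\[
(uv)\,\alpha\,\alpha' = (u\,p_1\cdots p_{|S|-1}\,v\,q_1\cdots q_{|S'|-1}),
\]
which must equal $(a_1\cdots a_k)$. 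Consequently each of $S,S'$ is a consecutive arc of the necklace $(a_1\,a_2\cdots a_k)$, and $\alpha$ (resp. $\alpha'$) is exactly the cycle that runs through its arc in the cyclic order $a_1,a_2,\dots$. Now at least one of the two complementary arcs does not contain the ``gap'' between $a_k$ and $a_1$. If that arc has at least two elements, say it equals $\{a_s,\dots,a_e\}$ with $1\le s\le e\le k$, apply the inductive hypothesis to the cycle $(a_s\cdots a_e)$, which the corresponding subtree factors into $e-s$ transpositions: this yields an edge $\{a_j,a_{j+1}\}$ of that subtree (hence of $T$) with $s\le j\le e-1\le k-1$, exactly of the desired form. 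If instead that arc is a singleton, a one-line check of the splicing identity shows $\tau_1$ itself already has the form $(a_j\,a_{j+1})$ with $1\le j\le k-1$.

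The step I expect to be the only real obstacle is managing the ``gap'' $\{a_k,a_1\}$: the splicing identity is symmetric in the two arcs and is happy to return the forbidden pair $\{a_k,a_1\}$, so one must consistently recurse on an arc whose elements can be written with non-wrapping indices $a_s,\dots,a_e$, and then separately dispatch the degenerate case where no non-wrapping arc has more than one vertex. Checking that one of these two routes is always available is the delicate bookkeeping; the rest is routine.
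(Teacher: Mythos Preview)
Your argument is correct and follows essentially the same inductive strategy as the paper (graph/tree for (a)--(b), peel off an extremal factor and recurse on one subtree for (c)). The paper sidesteps all of your ``gap'' bookkeeping by writing the peeled edge (the rightmost factor, in their version) as $(a_u a_v)$ with the normalization $u<v$, and then always recursing on the arc $\{a_{u+1},\dots,a_v\}$, which is automatically non-wrapping and has at least two vertices whenever $v-u>1$; adopting that normalization would let you delete your final paragraph entirely.
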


\begin{proof}
Since $(ij)(ab)(ij)$ equals a transposition, a product of nondistinct
transpositions reduces to a shorter product of distinct transpositions.
Thus it suffices to prove the result when 
the factors of $P$ are distinct.
Let $W$ denote the set of entries in the product $P$.
Note that $W$ contains the set $V:=\{a_1,\dots,a_k\}$.
Define a graph $G$ with vertex set $W$ and with $t$ 
edges $[i,j]$ corresponding to the $t$ transposition
factors $(ij)$ of $P$.
Since $P=(a_1 \dots a_k)$ is a product of these 
$t$ transpositions, the graph $G$
has a connected component $H$ whose vertex set contains $V$.
A connected graph with $M$ vertices has at least $M-1$ edges
\cite[Theorem 11.2.1, p. 163]{cameron}, so $H$ and hence $G$ must have at least
$|V|-1=k-1$ edges.  Thus $t \ge k-1$.  This proves part (a).
(For another proof of part (a), see \cite[p. 77]{isaacs}. For a generalization
proved via linear algebra, see \cite{mackiw}.)

For the rest of this proof, suppose that $t = k-1$. 
Then $H$ has $t$ edges, so $G=H$ and
$G$ is connected.
If $V$ were {\em strictly} contained in $W$,
then again by \cite[Theorem 11.2.1, p. 163]{cameron},
$G$ would have at least $k$ edges.  Thus $V=W$,
which proves part (b).  (For a generalization of part (b), see
\cite{neuen}.)

To prove part (c), it remains
to prove that one of the $k-1$ edges of $G$ has the form
$[a_i,a_{i+1}]$ with $1 \le i < k$.
This is clear for $k=2$, so we let
$k \ge 3$ and induct on $k$.
A connected graph with $k$ vertices is a tree if and only if it has $k-1$
edges \cite[Theorem 11.2.1, p. 163]{cameron}.  Thus $G$ is a tree.
Let $(a_u a_v)$ denote the rightmost factor of $P$, with $u < v$.
Write $w=v-u$. If $w=1$, we are done, so assume that $w>1$.
Define the disjoint cycles
\[
r=(a_{u+1}\dots a_v) \ \mbox{  and  }  \ s=(a_1\dots a_u,a_{v+1}\dots a_k),
\]
so that $r$ is a $w$-cycle and $s$ is a $(k-w)$-cycle.
If $v=k$, then $s$ is interpreted as $(a_1\dots a_u)$, which in turn
is interpreted as the identity permutation when $u=1$.
Define $P'$ to be the product obtained from $P$ by removing the
rightmost factor $(a_ua_v)$. Let
$G'$ be the graph obtained from
$G$ by removing the edge $[a_u,a_v]$.  
Then $P'$ has $k-2$ factors and $G'$ has $k-2$ edges.
Since $P=sr(a_ua_v)$, we have $P' = sr$. It follows
that $G'$ is a forest of two trees $R$ and $S$, where
$R$ is a tree on the $w$ vertices $a_{u+1},\dots ,a_v$ and 
$S$ is a tree on the remaining vertices in $V$.
The $w$-cycle $r$ equals a product $Q$
of the $w-1$ factors of $P$ corresponding to the $w-1$ edges of $R$.
Since $w < k$, it follows by induction that $Q$, and hence $P$, 
has a factor of the required form $(a_ia_{i+1})$.
\end{proof}

\section{Optimal methods to undo 
\boldmath{$P_1$} and \boldmath{$P_2$}}
\begin{thm}\label{thm:2}
For $n \ge 5$, let $P_1$ denote the product of $n-1$
transpositions in $S_n$ given by
$P_1=(12)(23)(34)\cdots(n-1,n).$
There exists a product $\sigma$ of $n+1$ distinct transpositions in $S_n$
which undoes $P_1$,
and this result is best possible in the sense that
no such $\sigma$ can have fewer than $n+1$ distinct factors.
\end{thm}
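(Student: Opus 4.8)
The plan is to identify the permutation $P_1$, reduce the theorem to two statements (a construction and a lower bound), and handle the lower bound first since it is the easier half.

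As noted in the introduction, $P_1$ viewed as a permutation is the $n$-cycle $(12\dots n)$, so its transposition factors are exactly the $n-1$ \emph{consecutive} transpositions $(i,i+1)$, $1\le i\le n-1$. Consequently a product $\sigma$ of distinct transpositions undoes $P_1$ precisely when $\sigma$, as a permutation, equals $P_1^{-1}=(1,n,n-1,\dots,2)$ and no factor of $\sigma$ is consecutive. The theorem thus splits into: (a) exhibit such a $\sigma$ with $n+1$ factors; (b) show every such $\sigma$ has at least $n+1$ factors.

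For (b), suppose $\sigma$ is a product of $t$ distinct non-consecutive transpositions equal to $P_1^{-1}$. Since $P_1^{-1}$ is an $n$-cycle, Lemma 1(a) gives $t\ge n-1$, and (because an $n$-cycle is a product of $n-1$ transpositions) the Parity theorem forces $t\equiv n-1\pmod 2$; in particular $t\ne n$, so it suffices to rule out $t=n-1$. The key move is to write the $n$-cycle $P_1^{-1}$ as $(a_1a_2\cdots a_n)$ with $a_i=n+1-i$, i.e. as $(n,n-1,\dots,2,1)$. If $t=n-1$, Lemma 1(c) yields a factor of $\sigma$ of the form $(a_i,a_{i+1})$ with $1\le i<n$; but $(a_i,a_{i+1})=\{n-i,n+1-i\}$ is a consecutive transposition, hence a factor of $P_1$, contradicting the hypothesis on $\sigma$. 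Therefore $t\ge n+1$.

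For (a), I would argue by induction on $n$, the engine being the identity
\[
P_1^{-1}=(1,n)\cdot\rho_n,\qquad \rho_n:=(2,n,n-1,\dots,3),
\]
in which $\rho_n$ is an $(n-1)$-cycle supported on $\{2,\dots,n\}$; the relabelling $i\mapsto i-1$ of $\{2,\dots,n\}$ onto $\{1,\dots,n-1\}$ carries $\rho_n$ to the permutation $P_1^{-1}$ of $S_{n-1}$ and carries non-consecutive transpositions to non-consecutive transpositions. Hence, given a product $\sigma_{n-1}$ of $n$ distinct non-consecutive transpositions of $S_{n-1}$ equal to $P_1^{-1}\in S_{n-1}$, the product
\[
\sigma_n:=(1,n)\cdot\sigma_{n-1}^{+},
\]
where $\sigma_{n-1}^{+}$ is $\sigma_{n-1}$ with every entry increased by $1$, equals $(1,n)\cdot\rho_n=P_1^{-1}\in S_n$ and consists of $n+1$ transpositions; the entry $1$ appears in no factor of $\sigma_{n-1}^{+}$, so $(1,n)$ is distinct from all the other factors, and every factor is non-consecutive, so $\sigma_n$ undoes $P_1$. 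The recursion terminates at $n=5$, which is exactly where the hypothesis $n\ge5$ enters (for $n\le4$ there are fewer than $n+1$ non-consecutive transpositions in $S_n$, so no valid base case exists): one checks directly that
\[
\sigma_5=(3,5)(1,5)(2,4)(1,4)(1,3)(2,5)=(1,5,4,3,2)
\]
is a product of $6=5+1$ distinct non-consecutive transpositions equal to $P_1^{-1}\in S_5$. Unrolling, $\sigma_n$ is (in a specific order) the product of the $n-2$ transpositions $(j,n)$ with $1\le j\le n-2$ together with $(n-4,n-2)$, $(n-4,n-1)$, and $(n-3,n-1)$.

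I expect the construction to be the main obstacle. The lower bound follows almost immediately once one picks the right rotation $(n,n-1,\dots,1)$ of $P_1^{-1}$ and applies Lemma 1(c). The construction, by contrast, forces one to commit to an actual \emph{ordering} of the transpositions; the real labour is verifying the base case $\sigma_5$ — equivalently, finding an ordering of the six non-consecutive transpositions of $S_5$ whose product is the prescribed $5$-cycle — and then checking that the inductive step reproduces $P_1^{-1}$ exactly while preserving both distinctness and the prohibition on consecutive transpositions.
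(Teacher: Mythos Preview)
Your proof is correct. The lower-bound half (your part (b)) is essentially identical to the paper's argument: both use the Parity theorem to force $t=n-1$ and then invoke Lemma~1(c), the only cosmetic difference being that the paper applies Lemma~1(c) to $E^{-1}=P_1=(12\dots n)$ whereas you apply it to $\sigma=P_1^{-1}=(n,n-1,\dots,1)$; either orientation produces a consecutive factor and hence the contradiction.

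Your construction, however, is genuinely different from the paper's. The paper writes down a single closed-form product
\[
\sigma=(3n)(2,n-1)(1n)(14)(2n)(13)\cdot(35)(36)\cdots(3,n-1)
\]
and simply asks the reader to check that $\sigma P_1=I$. You instead build $\sigma_n$ inductively from the identity $P_1^{-1}=(1,n)\rho_n$ and the shift $i\mapsto i-1$, with $\sigma_5$ as the base case. The two routes coincide at $n=5$ (both use all six non-consecutive transpositions of $S_5$, necessarily) but diverge for $n\ge 6$: your $\sigma_n$ consists of $(j,n)$ for $1\le j\le n-2$ together with $(n-4,n-2)$, $(n-4,n-1)$, $(n-3,n-1)$, a different set of transpositions from the paper's. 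What your approach buys is an explanation of \emph{why} such a $\sigma$ exists---the problem for $S_n$ reduces to the problem for $S_{n-1}$ plus one new factor---at the cost of having to verify the base case $\sigma_5$ by hand (which you did correctly). What the paper's approach buys is brevity: no induction to set up, just one formula to check. Both are perfectly valid.
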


\begin{proof}
Define
\[
\sigma = (3n)(2,n-1)(1n)(14)(2n)(13)\cdot (35)\cdots(3,n-1),
\]
where when $n=5$, the empty product 
$(35)\cdots(3,n-1)$
is interpreted as the identity.  
It is easily checked that $\sigma P_1=I$ and that
$\sigma$ is a product of $n+1$ distinct transpositions in $S_n$
all distinct from the $n-1$ transpositions in $P_1$.
It remains to prove optimality.

Suppose for the purpose of contradiction that there exists
a product $E$ of $k < n+1$ distinct transpositions in $S_n$
for which $E P_1=I$ and for which the $k$ transpositions in $E$
are distinct from the $n-1$ transpositions in $P_1$.
Since $E P_1=I$, the Parity theorem shows that $k \le n-1$.
On the other hand, 
since $P_1 = (12\dots n)$,
Lemma 1(a) gives $k \ge n-1$.  Thus
the number of transpositions in the product $E$ is exactly $n-1$.
Note that $E^{-1}$ is a product of these same $n-1$ transpositions
in reverse order, and $E^{-1} = P_1 = (12\dots n)$.  Hence by
Lemma 1(c), one of these $n-1$ transpositions in $E$ has the form
$(i,i+1)$ with $1 \le i < n$.
This contradicts the distinctness of the factors of $E$ from those in $P_1$,
since by definition, $P_1$ is a product of all $n-1$ transpositions
$(i,i+1)$ with $1 \le i < n$.
\end{proof}

\begin{thm}\label{thm:3}
For $n \ge 3$, let $P_2$ denote the product of $n-1$
transpositions in $S_n$ given by
$P_2=(n,n-1)\cdots(n3)(n2)(n1).$
There exists a product $\tau$ of $n+1$ distinct transpositions in $S_{n+1}$
which undoes $P_2$, 
and this result is best possible in the sense that
no such $\tau$ can have fewer than $n+1$ distinct factors.
\end{thm}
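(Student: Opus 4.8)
The plan is to follow the two-part strategy used for Theorem 2: first exhibit an explicit product $\tau$ that uses only the single outsider $x=n+1$, and then derive optimality from the Parity theorem together with Lemma 1.

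For the construction, observe that $P_2=(12\cdots n)$, so undoing $P_2$ amounts to realizing the $n$-cycle $\tau=P_2^{-1}=(1\,n\,n{-}1\,\cdots\,3\,2)$ as a product of $n+1$ distinct transpositions in $S_{n+1}$, none of them equal to one of the $n-1$ factors $(n,1),(n,2),\dots,(n,n-1)$ of $P_2$. Since those forbidden transpositions are exactly the ones that can move the entry $n$ inside $S_n$, the outsider must be enlisted to move $n$. I would take
\[
\tau=(2,3)(2,4)\cdots(2,n-1)\cdot(2,x)\cdot(2,1)\cdot(n,x)\cdot(1,x),
\]
where for $n=3$ the empty chain $(2,3)\cdots(2,n-1)$ is read as the identity. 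A short computation shows that the first $n-1$ factors form the $n$-cycle $(1\,x\,n{-}1\,n{-}2\,\cdots\,2)$ on $\{1,\dots,n-1,x\}$, and that appending $(n,x)(1,x)$ converts this into $(1\,n\,n{-}1\,\cdots\,2)=P_2^{-1}$, so $\tau P_2=I$. One then checks directly that the $n+1$ displayed factors are pairwise distinct and each distinct from every factor of $P_2$ (the only factor of $\tau$ touching $n$ is $(n,x)$, which is never of the form $(n,i)$ with $i<n$).

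For optimality, suppose for contradiction that a product $\tau$ of $k<n+1$ distinct transpositions in $S_{n+1}$ undoes $P_2$. Then $\tau=P_2^{-1}$ as a permutation, so $\tau$ is an $n$-cycle; since $\tau P_2=I$ is a product of $k+(n-1)$ transpositions, the Parity theorem forces $k+(n-1)$ to be even, hence $k\le n-1$, while Lemma 1(a) gives $k\ge n-1$. Thus $k=n-1$, and Lemma 1(b) forces the set of entries occurring in $\tau$ to be exactly the support $\{1,2,\dots,n\}$ of the cycle $P_2^{-1}$; in particular the outsider $x$ does not occur in $\tau$. But $\tau$ moves $n$ (indeed $\tau(n)=n-1$), so some factor of $\tau$ must have the form $(n,i)$ with $i\in\{1,\dots,n-1\}$ — and every such transposition is already a factor of $P_2$. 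This contradicts the distinctness of the factors of $\tau$ from those of $P_2$, completing the proof.

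I expect the optimality half to be the main obstacle, and its crux is the observation that, once the Parity theorem and Lemma 1(a) pin the count to $k=n-1$, Lemma 1(b) removes the outsider from play, after which the fact that $P_2$ has ``used up'' every transposition capable of moving $n$ yields the contradiction. (Unlike the proof of Theorem 2, this argument does not invoke Lemma 1(c).) On the construction side the only point requiring care is to factor the $S_{n-1}$-part of $\tau$ through the vertex $2$ rather than through $1$ or $x$, so as to avoid reusing $(1,x)$ or colliding with $(n,x)$.
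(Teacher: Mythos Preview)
Your proof is correct and follows essentially the same approach as the paper: an explicit $\tau$ of length $n+1$, followed by the optimality argument via the Parity theorem, Lemma~1(a), and Lemma~1(b). The only difference is cosmetic: the paper's explicit product is the ``star'' $\tau=(2,n{+}1)(3,n{+}1)\cdots(n,n{+}1)\cdot(1,2)(1,n{+}1)$ centered at the outsider $x=n{+}1$, whereas yours is centered at $2$; the optimality half is word-for-word the same.
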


\begin{proof}
Define
\[
\tau = (2,n+1)(3,n+1)(4,n+1)\cdots (n,n+1)\cdot (1,2)(1,n+1).
\]
It is easily checked that $\tau P_2=I$ and that
$\tau$ is a product of $n+1$ distinct transpositions in $S_{n+1}$
all distinct from the $n-1$ transpositions in $P_2$.
It remains to prove optimality.

Suppose for the purpose of contradiction that there exists
a product $F$ of $k < n+1$ transpositions in $S_{n+1}$
for which $F P_2=I$ 
and for which the $k$ transpositions in $F$
are distinct from the $n-1$ transpositions in $P_2$.
Since $F P_2=I$, the Parity theorem shows that $k \le n-1$.
On the other hand, 
since $P_2 = (12\dots n)$,
Lemma 1(a) gives $k \ge n-1$.  Thus
the number of transpositions in the product $F$ is exactly $n-1$.
Note that $F^{-1}$ is a product of these same $n-1$ transpositions
in reverse order, and $F^{-1} = P_2 = (12\dots n)$.  Hence by
Lemma 1(b), the entries in these $n-1$ transpositions all lie
in the set $\{1,2,\dots,n\}$.  Since the permutation $F$ moves $n$,
it follows that
one of these $n-1$ transpositions in $F$ has the form $(in)$ with
$1 \le i < n$.
This contradicts the distinctness of the factors of $F$ from those in $P_2$,
since by definition, $P_2$ is a product of all $n-1$ transpositions
$(in)$ with $1 \le i < n$.  
\end{proof}

\begin{rem}
When $n=2$, two outsiders are required to undo
$P_1=P_2=(12)$, and an optimal $\sigma$ is given by
$(34)(23)(14)(24)(13)$.  In the cases $n=3$ and $n=4$, one outsider is
required to undo $P_1$, and optimal $\sigma$'s are given by
$(14)(13)(24)(34)$ and $(14)(25)(24)(35)(45)$, respectively.
\end{rem}

\section{$I$ as a product of \boldmath{$m$} 
distinct transpositions in \boldmath{$S_n$}}

\begin{thm}\label{thm:4}
For the identity $I$ to be expressible as a product of $m$
distinct transpositions in $S_n$, it is necessary and sufficient that
$m$ be an even integer with $6 \le m \le \binom{n}{2}$.
\end{thm}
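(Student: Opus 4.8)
The plan is to prove necessity first, then sufficiency by an explicit construction. For necessity, suppose $I = T_1 \cdots T_m$ with the $T_i$ distinct transpositions in $S_n$. The Parity theorem immediately forces $m$ to be even, so in particular $m \neq 1,3,5$. To rule out $m = 2$ and $m = 4$: if $m=2$ then $I = T_1 T_2$ forces $T_1 = T_2$, contradicting distinctness. If $m = 4$, I would argue that no product of four \emph{distinct} transpositions can equal $I$; one clean way is to note that the subgroup generated by the entries appearing, viewed through the associated graph (as in Lemma 1), must be connected on its support of size $s \ge 3$ (size $2$ is impossible since a single edge used with multiplicity would be needed), hence has at least $s-1 \ge 2$ edges, and a short case analysis on $s = 3$ (at most $\binom{3}{2}=3$ distinct transpositions, and one checks no product of $\le 4$ of them from $S_3$ gives $I$ except the trivial $m=2$ repetition, which is disallowed) and $s \ge 4$ (then we need $m \ge s - 1 + (\text{parity correction})$, and a direct check shows $m=4$ with support $4$ forces a repeated factor) eliminates $m=4$. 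Finally $m \le \binom{n}{2}$ is forced simply because there are only $\binom{n}{2}$ transpositions in $S_n$ and they must all be distinct.

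For sufficiency, I must exhibit, for every even $m$ with $6 \le m \le \binom{n}{2}$, a product of exactly $m$ distinct transpositions in $S_n$ equal to $I$. The base case $m = 6$ is handled by a small explicit identity: for instance in $S_4$ one checks directly that a suitable product of six distinct transpositions equals $I$ — e.g. building $I$ as $(12)(13)(23)$ times $(14)(24)(34)$ suitably arranged, or more simply by writing $I = (12)(13)(23) \cdot (12)(13)(23)$ and then perturbing — the point is just to produce one valid $6$-term relation, which exists since all $\binom{3}{2}=3$ transpositions in $S_3$ composed as $(12)(13)(23)$ give a $3$-cycle, and combining two disjoint such blocks, or the Keeler-style identities $\sigma P_1 = I$ from Theorem 2 with $n=5$ (giving $2n=10$, too big) suggests instead using the $n=4$ remark's relations as a guide. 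The cleanest route: take the known $6$-term relation in $S_3 \subseteq S_n$ coming from $(13)(12)(23)\cdot(12)(23)(13) = I$ after checking distinctness fails, so instead use three transpositions on $\{1,2,3\}$ and three on $\{1,2,4\}$ sharing only the edge $(12)$ — a direct computation gives a genuine $6$-term identity with distinct factors in $S_4$.

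The inductive step is the engine: given a product of $m$ distinct transpositions equal to $I$ using only transpositions from some set $\mathcal{T}$, and given that $\binom{n}{2} \ge m + 2$, I want to produce a product of $m+2$ distinct transpositions equal to $I$. The idea is to locate two equal consecutive transpositions we can \emph{insert}: pick any transposition $(ab) \notin$ (current product) — possible since $m < \binom{n}{2}$ — and any other transposition $(cd)$ not yet used and disjoint-or-overlapping as needed; then use the relation $(ab)(cd)(ab)(cd)\cdots$ — more precisely, if $(ab)$ and $(cd)$ share exactly one point, say $b=c$, then $(ab)(bd)(ab)(bd) \ne I$ in general, so instead I use: insert the block $(ab)\,(ab)$ is forbidden (not distinct), so the correct move is to replace a single factor $T$ in the product by three factors whose product is $T$ and which introduce two \emph{new} distinct transpositions. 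Concretely, $T = (ab) = (ac)(bc)(ac)$ for any new symbol $c$ with $(ac),(bc)$ unused: this swaps one old factor for three factors, two of which, $(ac)$ and $(bc)$, are brand new and distinct, netting $+2$ to the count while preserving distinctness — provided such a fresh $c$ with both $(ac),(bc)$ unused exists, which a counting argument guarantees whenever $\binom{n}{2}$ is large enough relative to $m$. Handling the boundary (when $m$ is close to $\binom{n}{2}$ and few fresh transpositions remain) is the main obstacle: there I would instead appeal directly to Theorem 2, which for $n \ge 5$ realizes $I$ as a product of $2n$ distinct transpositions, and to a companion explicit large construction realizing $I$ as a product of all $\binom{n}{2}$ or $\binom{n}{2}-1$ (the latter being odd, hence excluded, so all $\binom{n}{2}$ when that is even) transpositions, then fill the gap between $2n$-ish and $\binom{n}{2}$ by the $+2$ induction run \emph{downward} via the inverse substitution $(ac)(bc)(ac) \mapsto (ab)$, which removes two factors. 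The delicate point throughout is ensuring at each step that the two transpositions being added or removed are genuinely absent from / present in the current product, so the bulk of the write-up is a careful bookkeeping argument on which transpositions are "in play," not any hard algebra.
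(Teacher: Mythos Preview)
Your overall strategy matches the paper's: prove necessity by parity plus ruling out $m=2,4$, and prove sufficiency by an explicit base case together with a ``replace one factor by three'' substitution that increments $m$ by $2$. However, the execution has a genuine gap at the key step. You write $T=(ab)=(ac)(bc)(ac)$ and propose to replace $(ab)$ by these three factors; but $(ac)$ appears \emph{twice} in that product, so the resulting longer word is no longer a product of distinct transpositions. The identity you want is $(ab)=(ac)(ab)(bc)$ (the paper's $f(a,b,c)$): here the three factors $(ac),(ab),(bc)$ are distinct, the factor $(ab)$ stays, and the two genuinely new factors are $(ac)$ and $(bc)$, exactly as your verbal description requires. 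As written, your inductive engine does not run.

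Two further gaps deserve attention. First, you never actually exhibit a valid $m=6$ identity: your attempts in $S_3$ all fail distinctness (as you note), and you end by asserting that ``a direct computation'' in $S_4$ works without producing one. The paper simply writes down $I=(12)(23)(14)(13)(24)(34)$ in $S_4$. Second, your handling of the top of the range is hand-waved: to reach $m=\binom{n}{2}$ (when even) you need a product using \emph{every} transposition in $S_n$, and the $+2$ substitution only helps if at each stage you can find a factor $(ab)$ and a symbol $c$ with both $(ac)$ and $(bc)$ still unused---which is exactly what fails near the ceiling. The paper resolves this by an explicit, carefully tracked construction through $S_4,S_5,\dots,S_8$, including one ``atypical'' substitution to absorb the last missing transpositions, and then observes the pattern repeats. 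Your proposed alternative (appeal to Theorem~2 and run the substitution downward) does not work as stated: Theorem~2 gives $m=2n$, not $m=\binom{n}{2}$, and the downward move $(ac)(ab)(bc)\mapsto(ab)$ requires three consecutive factors of that exact shape to be present, which you have not arranged. Finally, your necessity argument for $m=4$ is only sketched; the paper likewise says this is ``not hard,'' but you should at least give a clean argument (e.g.,\ four distinct transpositions span at most five symbols, and a short exhaustive check in $S_5$ suffices).
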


\begin{proof}
We begin by showing that the conditions are necessary.  First, $m$ must
be even by the Parity theorem, and 
it is not hard to show that $m$ cannot equal 2 or 4.
Furthermore, $m$ cannot exceed $\binom{n}{2}$, 
since $\binom{n}{2}$ is the number of distinct transpositions in $S_n$.
This proves necessity, and it remains to show sufficiency.

Define 
$f(a,b,c)= (ac)(ab)(bc),$
which we view formally as a product
of 3 transpositions,  while noting that
$f(a,b,c)$ equals $(ab)$ when viewed as a permutation.
If a product $\lambda$ of transpositions has a factor $(ab)$,
then formally replacing $(ab)$ by $f(a,b,c)$ increases the number
of $\lambda$'s factors by 2, without altering
$\lambda$ as a permutation.

For even $m$ in the appropriate range, we now show
how to express $I$ explicitly as a product
of $m$ distinct transpositions in $S_4$, $S_5$, $S_6$, $S_7$, and $S_8$.
An analogous treatment will then inductively express $I$ as a product
of $m$ distinct transpositions in 
$S_{4k}$, $S_{4k+1}$, $S_{4k+2}$, $S_{4k+3}$, and $S_{4k+4}$,
for all $k \ge 2$, thus completing the proof.

For $m=6$, we have the base case
\[
I=(12)(23)(14)(13)(24)(34) \quad \mbox{in } S_4.
\]
This equality uses all six transpositions in $S_4$,
so to consider the values $m=8, 10$, we move up to $S_5$.
For $m=8$, replace the 
first transposition $(12)$ above by $f(1,2,5)$ to obtain
\[
I=(15)(12)(25)(23)(14)(13)(24)(34) \quad \mbox{in } S_5.
\]
For $m=10$, replace the transposition $(34)$ above by $f(3,4,5)$ to obtain
\[
I=(15)(12)(25)(23)(14)(13)(24)(35)(34)(45) \quad \mbox{in } S_5.
\]
This equality uses all ten transpositions in $S_5$,
so to consider the values $m=12, 14$, we move up to $S_6$.
For $m=12$, replace $(23)$ above by $f(2,3,6)$ to obtain
\[
I=(15)(12)(25)(26)(23)(36)(14)(13)(24)(35)(34)(45) \quad \mbox{in } S_6.
\]
For $m=14$, replace $(45)$ above by $f(4,5,6)$ to obtain
\[
I=(15)(12)(25)(26)(23)(36)(14)(13)(24)(35)(34)(46)(45)(56) \quad \mbox{in } S_6.
\]
This equality uses all of the fifteen transpositions in $S_6$
except for $(16)$, so to consider the values 
$m=16, 18, 20$, we move up to $S_7$.
For $m=16$, $m=18$, and $m=20$, successively replace
$(12)$ by $f(1,2,7)$, $(34)$ by $f(3,4,7)$,  and $(56)$ by $f(5,6,7)$,
respectively.
This yields the following for $m=20$:
\begin{eqnarray*}
I= &(15)(17)(12)(27)(25)(26)(23)(36)(14)(13)(24)(35) \times \\
& \quad \times \ (37)(34)(47)(46)(45)(57)(56)(67)  \quad \mbox{in } S_7.
\end{eqnarray*}
This equality uses all of the twenty-one transpositions in $S_7$
except for $(16)$, so to consider the values $m=22, 24, 26, 28$, 
we move up to $S_8$.
For $m=22$, $m=24$, and $m=26$, successively replace
$(23)$ by $f(2,3,8)$, $(45)$ by $f(4,5,8)$,  and $(67)$ by $f(6,7,8)$,
respectively.
This yields the following for $m=26$:
\begin{eqnarray*}
I= &(15)(17)(12)(27)(25)(26)(28)(23)(38)(36)(14)(13)(24)(35)(37) \times \\
& \quad \times  \ (34)(47)(46) (48)(45)(58)(57)(56)(68)(67)(78)  
\quad \mbox{in } S_8.
\end{eqnarray*}
This equality uses all twenty-eight transpositions in $S_8$ except
$(16)$ and $(18)$.  This suggests that we make the
atypical replacement of $(68)$ by $f(6,8,1)$ to obtain 
the following for $m=28$:
\begin{eqnarray*}
I= &(15)(17)(12)(27)(25)(26)(28)(23)(38)(36)(14)(13)(24)(35)(37)
\times \\ & \quad \times \ 
(34)(47)(46)(48)(45)(58)(57)(56)(16)(68)(18)(67)(78)  \quad \mbox{in } S_8.
\end{eqnarray*}
This equality uses all twenty-eight transpositions in $S_8$.
From here, we can repeat the procedure.
\end{proof}


\begin{thebibliography}{1234}
\bibitem[1]{BB}
J. Beachy and W. Blair, {\em Abstract Algebra}, 3rd edition.,
Waveland Press,  Long Grove, IL, 2006.
\bibitem[2]{cameron}
P. J. Cameron, {\em Combinatorics: topics, techniques, algorithms},
Cambridge University Press, Cambridge, 1994.
\bibitem[3]{EH}
R. Evans and L. Huang, The Stargate switch (2012), available at
http://arxiv.org/abs/1209.4991.
\bibitem[4]{holiday}
Gateworld, synopsis of {\em Stargate SG-1}'s episode 
number 217  entitled
``Holiday", first aired February 5, 1999 during season 2, 
available at
http://www.gateworld.net/sg1/s2/217.shtml.
\bibitem[5]{grime}
J. Grime, Futurama and Keeler's theorem: original edit, video exposition
of Keeler's theorem  as presented in {\em Futurama}'s 2010 episode
``The Prisoner of Benda", uploaded on August 22, 2010, available at
http://www.youtube.com/watch?v=8M4dUj7vZJc.
\bibitem[6]{isaacs}
I. Martin Isaacs, {\em Algebra: A Graduate Course}, 
Graduate Studies in Mathematics, Vol. 100, Amer. Math. Soc.,
Providence, RI, 1994.
\bibitem[7]{isaacs2}
\textemdash\textemdash\textemdash, email communication, January 23, 2012.
\bibitem[8]{aps}
A. G. Levine, The Futurama of physics with David X. Cohen,
{\em Amer. Physical Soc. News} {\bf 19} (2010) 3,
available from APS at
http://www.aps.org/publications/apsnews/201005/PROFILES.cfm.
\bibitem[9]{mackiw}
G. Mackiw, Permutations as products of transpositions,
{\em Amer. Math. Monthly} {\bf 102} (1995) 438--440.
\bibitem[10]{genealogy}
Mathematics Genealogy Project, data for Ken Keeler,
available at
http://genealogy.math.ndsu.nodak.edu/id.php?id=129988.
\bibitem[11]{neuen}
D. Neuenschwander, On the representation of permutations as 
products of transpositions, {\em Elem. Math.} {\bf 56} (2001) 1--3.
\bibitem[12]{oliver}
R. K. Oliver, On the parity of a permutation, {\em Amer. Math. Monthly}
{\bf 118} (2008) 734--735.
\bibitem[13]{ams}
T. Phillips, Math in the Media, Amer. Math. Soc., 
Original math on {\em Futurama} (2010), available at
http://www.ams.org/news/math-in-the-media/10-2010-media.
\bibitem[14]{wga}
Writers Guild of America, Writers Guild Awards, 2011 Award
Winners, available at
http://wga.org/awards/awardssub.aspx?id=1517.

\end{thebibliography}
\end{document}